\documentclass{article}

\usepackage{fancyhdr}
\usepackage{extramarks}
\usepackage{amsmath}
\usepackage{amsthm}
\usepackage{amsfonts}
\usepackage{amssymb}
\usepackage{enumerate}
\usepackage{geometry}
\usepackage{hyperref}
\usepackage{cleveref}
\usepackage[maxnames = 1000, style=alphabetic, maxcitenames=2, url=false, isbn=false, doi=false, eprint=false, backend=biber]{biblatex}
\theoremstyle{definition}
\newtheorem{definition}{Definition}
\theoremstyle{definition}
\newtheorem{theorem}{Theorem}
\theoremstyle{definition}
\newtheorem{lemma}{Lemma}
\theoremstyle{definition}
\newtheorem{corollary}{Corollary}

\newtheoremstyle{named}{}{}{\itshape}{}{\bfseries}{.}{.5em}{\thmnote #3}
\theoremstyle{named}

\begin{document}
   \title{Mixing Times and Cutoff for the Rook's Walk} 
   \author{Jonatan Kaare-Rasmussen
   \thanks{University of California, Santa Barbara; 
   L\&S Mathematics and CCS Computing}}

   \date{\today}
   \maketitle

   \begin{abstract}
   We study the mixing time of the Rook's Walk Markov chain on a $d$-dimensional chess board of side length $n\geq 3$, where a rook moves by first selecting an axis uniformly at random and then selecting a new position along that axis uniformly from among the $n-1$ unoccupied alternatives. Our method is to lump the state space of the Rook's Walk by Hamming distance, yielding a birth-death Markov chain. We prove that this lumped birth-death chain has the same mixing time as the Rook's Walk and identify all eigenvalues and eigenfunctions of the projected chain. We then combine the eigenfunction lower bound approach of Wilson (2004) with an $L^2$ upper bound to obtain new sharpened bounds on the mixing time of the Rook's Walk. As a consequence, we show that the Rook's Walk Markov chain exhibits cutoff.
   \end{abstract}
   \section{Introduction}
   Given a $d$ dimensional chess board of length $n$ with a single rook, make a move uniformly at random from the set of valid rook moves. Recall that a rook can move any number of steps in any dimension. This defines the Markov chain called the Rook's Walk. In the language of Markov chains, this chain has state space $\Omega=\{1,\dots,n\}^d$ with transition probability from state $y\in \Omega$ to $x\in \Omega$ encoded in the transition matrix $P$ defined as,
   \[P(x,y) = \frac{1}{d(n-1)}\mathbf{1}_{\{||y-x||_0=1\}}\]
   where $||y-x||_0$ denotes the Hamming distance between $x$ and $y$ (i.e., the number of coordinates where $x$ and $y$ differ). This chain is ergodic, implying that it converges to a stationary distribution with respect to some notion of distance between distributions. It is standard to use the variation distance here, which for distributions $\mu$ and $\nu$ over $\Omega$ is defined as,
      \[||\mu-\nu||_\textsc{tv} = \sup_{A\subset \Omega} |\mu(A) - \nu(A)| = \frac{1}{2}\sum_{x\in \Omega}|\mu(x) - \nu(x)|\] 
   Here we study the mixing time of the Rook's Walk. Let $\pi$ be the stationary distribution and we can define the mixing time ($t_\text{mix}(\epsilon)$) in terms of the distance to stationary ($d(t)$) and time $t$, as follows.
      \[t_\text{mix}(\epsilon) = \inf \{t \mid d(t)\leq \epsilon\}, \quad \text{where} \quad d(t) =\max_{x\in \Omega}||P^t(x,\cdot) - \pi||_\textsc{tv}\]
   In \cite{steven_s._kim_mixing_2012} the Rook's Walk is decomposed into a product chain, which gives the eigenvalues of the transition matrix, resulting in the following upper and lower bounds obtained using spectral gap methods.
   \[\frac{d(n-1)}{n}\log\left(\frac{1}{2\epsilon}\right) \leq t_\text{mix}(\epsilon) \leq \frac{d(n-1)}{n}\log\left(\frac{n^d}{\epsilon}\right)\]
   In \cite{mcleman_mixing_2017} the upper bound is tightened using path coupling \cite{646111}. The upper bound obtained using this method is,
   \[t_\text{mix}(\epsilon) \leq \left\lceil\frac{\log\left(\frac{d}{\epsilon}\right)}{\log\left(\frac{d(n-1)}{(d-1)(n-1)+1}\right)}\right\rceil\]
   Here, we consider a projection or lumped chain of the Rook's Walk with the same mixing time as the standard Rook's Walk. The projection chain we consider is a birth-death chain, which has some special properties that allow us to find all eigenvalues \emph{and} all eigenfunctions of the projection chain. This allows us to make use of Wilson's lower bound \cite{Wilson_2004} and the $L^2$ upper bound \cite{diaconis_group_1988,diaconis_generating_1981,levin_markov_2017}
   to get the following tightened mixing time bounds.
      \begin{theorem}
         \label{main1}
      Let $t_\text{mix}(\epsilon)$ with $\epsilon >0$ be the mixing time of the Rook's Walk with dimension $d$ and board-length $n$. Then,
      \begin{multline*}
       \left(\log \left(\frac{d(n-1)}{2n}\right) + \log\left(\frac{1-\epsilon}{\epsilon}\right)\right)\left(2\log\left(\frac{d(n-1)}{d(n-1)-n}\right)\right)^{-1} \\
        \leq t_\text{mix}(\epsilon)  \leq-\frac{d(n-1)}{2n}\log\left((4\epsilon^2+1)^{1/d}-1\right) 
      \end{multline*}
   \end{theorem}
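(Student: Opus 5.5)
The plan is to prove the two inequalities separately, both from the spectral data of the Hamming-distance projection. First I compute the spectrum. Each coordinate, when it is selected (probability $1/d$), performs the random walk on the complete graph $K_n$, whose eigenvalues are $1$ and $-1/(n-1)$. By the product-chain decomposition of \cite{steven_s._kim_mixing_2012}, the Rook's Walk therefore has eigenvalues
\[
\lambda_k = 1-\frac{kn}{d(n-1)}, \qquad k=0,\dots,d,
\]
with multiplicity $\binom{d}{k}(n-1)^k$. Write $\gamma = n/(d(n-1))$, so that $\lambda_1 = 1-\gamma$. Lumping by Hamming distance from the starting state $x$ gives a birth-death chain on $\{0,\dots,d\}$ with the same eigenvalues. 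Because the walk is transitive, $d(t)$ equals the total variation distance of the lumped chain started at $0$, and this is why the two chains have the same mixing time. The lumped chain has eigenfunctions of Krawtchouk type. They are orthogonal in $L^2(\pi)$, and when normalized they satisfy $f_k(0)^2=\binom{d}{k}(n-1)^k$.

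\textbf{Lower bound.} I would apply Wilson's lemma \cite{Wilson_2004}. For an eigenfunction $\Phi$ with eigenvalue $\lambda$ and $R\ge \max_y \mathbb{E}_y[(\Phi(X_1)-\Phi(y))^2]$, it gives
\[
t_{\rm mix}(\epsilon)\ \ge\ \frac{1}{2\log(1/\lambda)}\left[\log\frac{(1-\lambda)\Phi(x)^2}{2R}+\log\frac{1-\epsilon}{\epsilon}\right].
\]
I take the first nontrivial eigenfunction $\Phi(y)=\sum_{i=1}^d\bigl(\mathbf{1}\{y_i=x_i\}-\tfrac1n\bigr)$. It depends only on the Hamming distance from $x$ and has eigenvalue $\lambda_1$. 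One step changes one coordinate, so $|\Phi(X_1)-\Phi(y)|\le 1$ and we may take $R=1$. Also $\Phi(x)=d(n-1)/n$. Hence
\[
\frac{(1-\lambda_1)\Phi(x)^2}{2R}=\frac{n}{d(n-1)}\cdot\frac{d^2(n-1)^2}{2n^2}=\frac{d(n-1)}{2n},
\]
and $\log(1/\lambda_1)=\log\frac{d(n-1)}{d(n-1)-n}$. This is exactly the left-hand side of Theorem~\ref{main1}.

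\textbf{Upper bound.} I would use the $L^2$ bound
\[
4\|P^t(x,\cdot)-\pi\|_{\textsc{tv}}^2\le \Bigl\|\tfrac{P^t(x,\cdot)}{\pi}-1\Bigr\|_{2}^2=\sum_{k=1}^d f_k(0)^2\lambda_k^{2t},
\]
and compare the sum with a binomial expansion. The aim is an estimate of the form $\sum_{k\ge1}\binom dk a_k\,\lambda_k^{2t}\le (1+e^{-2t\gamma})^d-1$. This uses $1-k\gamma\le e^{-k\gamma}$ on the positive part of the spectrum. Setting the right-hand side $\le 4\epsilon^2$ and solving gives $e^{-2t\gamma}\le (4\epsilon^2+1)^{1/d}-1$, which is the stated $t\ge -\frac{1}{2\gamma}\log\bigl((4\epsilon^2+1)^{1/d}-1\bigr)$.

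\textbf{Main obstacle.} The delicate step is this comparison. The multiplicities carry a factor $(n-1)^k$, which the stated bound does not show. In addition, the eigenvalues with $k\gamma>1$ are negative, down to $\lambda_d=-1/(n-1)$. For $n=3$ the naive inequality $|\lambda_k|\le e^{-k\gamma}$ fails there, since $1/2>e^{-3/2}$. I would first try to absorb both effects by pairing terms in the sum, or by using a lazy/continuous-time comparison for the negative eigenvalues. If that fails, I would check whether the $(n-1)^k$ weight can be absorbed by reweighting with the lumped stationary distribution. Otherwise the upper bound would have to carry an extra $\log(n-1)$ term, and the argument would need to be adjusted accordingly.
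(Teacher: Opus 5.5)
Your lower bound is correct and is the paper's argument up to scaling. The paper applies Wilson's lemma on the lumped chain to $\Phi(h)=1-\frac{nh}{d(n-1)}$ with $R=\frac{n^2}{d^2(n-1)^2}$. That is your $\Phi$ divided by $\frac{d(n-1)}{n}$, and $\Phi(x)^2/R$ is scale-invariant, so both give $\frac{d(n-1)}{2n}$. (Like the paper, you do not check Wilson's hypothesis $\lambda_1>1/2$, i.e.\ $d(n-1)>2n$, which fails e.g.\ for $d=2$.)

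\textbf{The gap is the upper bound.} You never establish the comparison $\sum_{k\ge 1}\binom dk (n-1)^k\lambda_k^{2t}\le(1+e^{-2t\gamma})^d-1$. No pairing, lazy comparison or reweighting can supply it, because the stated upper bound is false when $n$ is large relative to $d$. Take $d=10$, $n=1000$, $\epsilon=1/4$:
\begin{itemize}
\item The right-hand side equals $4.995\,\log\bigl(1/(1.25^{1/10}-1)\bigr)\approx 18.94$.
\item Starting from $\hat 1$, a coordinate that has never been selected still equals $1$. The probability that some coordinate is unselected after $19$ steps is $1-\sum_{j=0}^{10}(-1)^j\binom{10}{j}(1-j/10)^{19}\approx 0.83$.
\item By contrast, $\pi(\{y: y_i=1 \text{ for some } i\})=1-(999/1000)^{10}\approx 0.01$.
\end{itemize}
Hence $d(19)>0.8$ and $t_{\mathrm{mix}}(1/4)\ge 20$. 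Heuristically, for $n\gg d$ the walk needs coupon-collector time $\approx d\log d$, roughly twice the claimed $\frac d2\log d$.

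Your diagnosis locates exactly where the paper's proof breaks. Lemma~\ref{choose} asserts $\phi_m(0)^2\le\binom dm$ by using $\langle K_m,K_m\rangle_\pi=n^d(n-1)^m\binom dm$. That is the norm for the unnormalized weights $\binom di(n-1)^i$. With $\pi(i)=\binom di(n-1)^i n^{-d}$ the norm is $\binom dm(n-1)^m$, so $\phi_m(0)^2=\binom dm(n-1)^m$, which is the multiplicity you wrote down.

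The paper also applies $\lambda_m^{2t}\le e^{-2tm\gamma}$ to the negative eigenvalues. As you noticed, this fails for $n\in\{3,4\}$. For $n\ge 5$ it holds for all $m$, since $\frac{1}{n-1}\le e^{-n/(n-1)}$.

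What the $L^2$ method actually gives for $n\ge 5$ is $4d(t)^2\le(1+(n-1)e^{-2t\gamma})^d-1$, hence
\[
t_{\mathrm{mix}}(\epsilon)\le\Bigl\lceil\frac{d(n-1)}{2n}\Bigl(\log(n-1)-\log\bigl((4\epsilon^2+1)^{1/d}-1\bigr)\Bigr)\Bigr\rceil .
\]
This is your fallback with the extra $\log(n-1)$ term. For $n\in\{3,4\}$, the terms with $k\gamma>1$ still need a separate estimate.
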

   \noindent Moreover, these bounds are sufficient to prove that the Rook's Walk exhibits the cutoff phenomenon. In general, the cutoff phenomenon is the property that as the state space grows, the chain transitions from unmixed to mixed abruptly. Specifically, for some sequence of Markov chains, indexed by $d\in \mathbb{N}$, we say that the sequence exhibits cutoff, if for any $\epsilon\in(0,1)$,
      \[\lim_{d\to \infty } \frac{t^{(d)}_\text{mix}(1-\epsilon)}{t^{(d)}_\text{mix}(\epsilon)}=1\]
   where $t_\text{mix}^{(d)}$ is the mixing time of the $d$th chain. In the case of the Rook's Walk, the sequence of Markov chains we consider is defined such that the $d$th chain is the Rook's Walk on the state space $\{1,\dots,n\}^d$ for some fixed $n$. Additionally, one can say something about the length of the period over which abrupt mixing occurs. Specifically we say a chain with parameter set $a$ has a cutoff at time $F(a)$ with window, $w(a)=o(F(a))$ if,
   \[F(a) + c_l(\epsilon) w(a) \leq t_\text{mix}(\epsilon) \leq F(a) + c_u(\epsilon) w(a)\]
   for sufficiently large parameter choices, where $c_l$ and $c_u$ are functions that depend only on $\epsilon$. For the Rook's Walk, we claim the following,
\begin{theorem}
         \label{main2}
   The Rook's Walk has cutoff at $\frac{d(n-1)}{2n}\log(d)$ with window $\frac{d(n-1)}{n}$. In other words, there exist functions $c_l(\epsilon), c_u(\epsilon)$ that only depend on $\epsilon$ such that,
   \[\frac{d(n-1)}{2n}\log(d) + c_l(\epsilon)\frac{d(n-1)}{n} \leq t_\text{mix}(\epsilon) \leq \frac{d(n-1)}{2n}\log(d) + c_u(\epsilon)\frac{d(n-1)}{n}\]
\end{theorem}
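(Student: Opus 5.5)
We derive Theorem~\ref{main2} directly from the two bounds of Theorem~\ref{main1}, treating $n\ge 3$ and $\epsilon\in(0,1)$ as fixed and letting $d\to\infty$. We write $T=\frac{d(n-1)}{n}$ for the window. The task is to show that each bound equals $\frac{T}{2}\log d$ plus a quantity that is $T$ times a bounded function of $\epsilon$ alone. We then take $c_l,c_u$ to be those functions, possibly adjusted by absolute constants to absorb error terms that vanish as $d$ grows.

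\textbf{Upper bound.} Set $a=4\epsilon^2+1>1$. Then
\[
a^{1/d}-1=e^{\log(a)/d}-1\ \ge\ \frac{\log a}{d},
\]
so
\[
-\log\left(a^{1/d}-1\right)\le \log d-\log\log(4\epsilon^2+1).
\]
Substituting into the upper bound of Theorem~\ref{main1} gives
\[
t_\text{mix}(\epsilon)\le \frac{T}{2}\log d+\frac{T}{2}\left(-\log\log(1+4\epsilon^2)\right).
\]
So $c_u(\epsilon)=-\tfrac12\log\log(1+4\epsilon^2)$ works for every $d$, not just for large $d$.

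\textbf{Lower bound.} Let $x=\frac{n}{d(n-1)}$, which satisfies $x\le 1/2$ once $d\ge 3$. The denominator of the lower bound is
\[
2\log\frac{1}{1-x}=2x+O(x^2),
\]
so
\[
\left(2\log\frac{1}{1-x}\right)^{-1}=\frac{1}{2x}\,(1-\theta x)=\frac{T}{2}-\frac{\theta}{2}
\]
for some $\theta\in[0,1]$ that may depend on $d,n$. This expansion is the main obstacle. A loose estimate such as $\log\frac{1}{1-x}\le x+x^2$ is only useful if the resulting loss is at most $O(1)$ in absolute terms, not $O(\log d)$ times $T$. Since $\log\frac{1}{1-x}\le x+x^2$ for $x\le\frac12$, we get
\[
\frac{1}{2(x+x^2)}\ge \frac{1}{2x}(1-x)=\frac{T}{2}-\frac12 .
\]

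The numerator is
\[
\log d+\log\frac{n-1}{2n}+\log\frac{1-\epsilon}{\epsilon}\ \ge\ \log d-\log 3+\log\frac{1-\epsilon}{\epsilon},
\]
using $\frac{n-1}{n}\ge \frac23$. Multiplying the two estimates (when the numerator is positive, i.e.\ for $d$ large) yields
\[
t_\text{mix}(\epsilon)\ge \frac{T}{2}\log d+\frac{T}{2}\left(\log\frac{1-\epsilon}{\epsilon}-\log 3\right)-\frac12\log d-O_\epsilon(1).
\]
The stray $-\frac12\log d$ is $o(T)$ but not $O(T)$ times a constant in an obvious way. However, $\log d\le d\le 2T$ because $T\ge \frac{2d}{3}$, so
\[
\frac12\log d+O_\epsilon(1)\le \left(1+C(\epsilon)\right)T .
\]
It can therefore be absorbed, and we may take
\[
c_l(\epsilon)=\tfrac12\log\frac{1-\epsilon}{\epsilon}-\tfrac12\log 3-1-C(\epsilon).
\]
Since $\log d/T\to0$, this term is in fact negligible.

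Combining both bounds gives the displayed cutoff statement with $F=\frac{T}{2}\log d$ and window $w=T=o(F)$. Cutoff in the ratio sense then follows from
\[
\frac{t_\text{mix}(1-\epsilon)}{t_\text{mix}(\epsilon)}\ \ge\ \frac{F+c_l(1-\epsilon)\,T}{F+c_u(\epsilon)\,T}\to 1,
\]
together with the trivial bound $t_\text{mix}(1-\epsilon)\le t_\text{mix}(\epsilon)$ for $\epsilon<\tfrac12$.
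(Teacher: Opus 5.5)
Your proposal is correct and is essentially the paper's own argument. Both bounds are read off from Theorem~\ref{main1}: the upper one via $e^{x}\ge 1+x$ exactly as in the paper, and the lower one via an estimate on $-\log(1-x)$ that gives the same factor $\frac{T}{2}-\frac12$ (with your $T=\frac{d(n-1)}{n}$) as the paper's $-\log(1-\xi)\le\frac{\xi}{1-\xi}$. Your bookkeeping is in fact cleaner: the paper bounds $\log\frac{n-1}{n}$ below by $+1$ (wrong sign) and multiplies the two lower bounds without checking that the numerator is positive, and you handle both correctly.

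One caveat, which you inherit from the paper rather than introduce, concerns the $n$-free constant $c_u(\epsilon)=-\frac12\log\log(1+4\epsilon^2)$. It rests on Lemma~\ref{choose}, which uses the unweighted Krawtchouk norm $n^d(n-1)^m\binom{d}{m}$ in place of $\langle K_m,K_m\rangle_\pi=(n-1)^m\binom{d}{m}$. So in fact $\phi_m(0)^2=\binom{d}{m}(n-1)^m$, consistent with $\sum_m\phi_m(0)^2=1/\pi(0)=n^d$, which the paper's values violate. The corrected $L^2$ bound then carries an extra $\frac{d(n-1)}{2n}\log(n-1)$, so $c_u$ must be allowed to depend on the fixed $n$. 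This is harmless for cutoff as $d\to\infty$ with $n$ fixed, but it is worth stating.
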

\noindent For context, the Rook's Walk is an example of a walk on a highly symmetric state space, and its natural lumping by Hamming distance is in many ways analogous to the classical reduction of the random walk on the hypercube to an Ehrenfest urn as treated in \cite{levin_markov_2017}. In these settings, the spectrum and eigenfunctions can often be written explicitly in an orthogonal polynomial basis, allowing the use of various powerful spectral techniques for mixing time analysis  \cite{saloff-coste_random_2004,levin_markov_2017}. Our contribution is to make this fully explicit for the Rook's Walk by (i) proving that our projection preserves total-variation mixing times, (ii) computing the complete spectral data for the projected birth-death chain, and (iii) combining an eigenfunction lower bound of Wilson with an $L^2$ upper bound to obtain sharp two-sided estimates and a cutoff statement with an explicit window.

\section{The Projection Chain}
In this section we define a projection chain of the Rook's Walk (Def. \ref{BDRW}) which projects each state in the state space of the Rook's Walk onto Hamming shells (Def. \ref{defphi}). We then show that the mixing time of this projection chain is identical to the mixing time of the full Rook's Walk (Corollary \ref{mixing_time_eq}).
\subsection{Mixing Time Equivalence}
Most of our analysis focuses on a projection of the standard Rook's Walk. With the correct projection, the mixing time is the same for both the projected chain and the standard Rook's Walk. This mixing time equivalence is only possible because the Rook's Walk is transitive, that is,
\begin{definition}
   \label{transitivity_def}
A Markov chain (with transition matrix $P$) is called {\bf transitive} if for each pair of states $(x,y)\in \Omega^2$, there is a bijection $\xi_{(x,y)}: \Omega\to \Omega$ such that,
\[\xi(x) = y\]
and,
\[P(z,w)=P(\xi(z),\xi(w)),\quad \forall z,w\in \Omega\]
\end{definition}
\begin{lemma}
   \label{transitivity}
  The Rook's Walk with dimension $d\in \mathbb{Z}_{\geq 2}$ and board length $n\in \mathbb{Z}_{\geq 3}$ is transitive.
  \begin{proof}
  Fix $x,y\in \Omega$ and let $d = (y-x \bmod n+1)$. Define, $\xi_{(x,y)}: \Omega \to \Omega$ as,
  \[v \overset{\xi_{(x,y)}}{\longmapsto} v + d,\quad  \quad \forall v\in \Omega\]
  Now, notice that,
  \begin{align*}
   \xi_{(x,y)}(x) &= x + d\\
   &= (x\bmod n+1) + (y-x \bmod n+1)\\
   &= (y\bmod n+1)\\
   &= y
  \end{align*}
  Therefore, $\xi_{(x,y)}$ satisfies our first condition. Now, fix $z,w \in \Omega$. Notice that the transition probability $P(z,w)$ is dependent only on the Hamming distance between $z$ and $w$ (Recall that $||z-w||_0$ is the Hamming distance). Further we have,
  \[||\xi_{(x,y)}(z) - \xi_{(x,y)}(w)||_0 = ||(z+d)-(w+d)||_0= ||z-w||_0\]
  That is, Hamming distance is invariant under $\xi_{(x,y)}$ and, by extension, \[P(z,w) = P(\xi_{(x,y)}(z), \xi_{(x,y)}(w))\]
  Therefore, the Rook's Walk is transitive.
  \end{proof}
\end{lemma}
\noindent The transitivity property allows us to consider the Rook's Walk from any arbitrary starting state. Therefore, we arbitrarily choose to start at the state $\hat{1}=\{1\}^d$. Now, our projection is onto the equivalence classes induced by the relation defined as the pairs of states in $\Omega$ that have the same Hamming distance from $\hat{1}$. This relation has equivalence classes that are called Hamming shells as defined below. 
\begin{definition}
   \label{defphi}
   Let $\Omega$ be the state space of a Rook's Walk with parameters $n$ and $d$ (i.e., $\Omega=\{1,\dots, n\}^d$). Define the $i^\text{th}$ Hamming shell $H_i$ to be,
   \[H_i = \{x\in \Omega \mid i= ||\hat{1}-x||_0\}, \quad \forall i\in \{0,\dots,d\}\]
   where $||\cdot||_0$ denotes Hamming distance and $\hat{1}:=\{1\}^d$. Furthermore, define $\varphi : \{0,\dots,d\} \to \mathbb{Z}_{\geq 0}$ such that 
   \[\varphi(i) = |H_i|, \quad  \forall i\in \{0,\dots,d\}\]
\end{definition}
\begin{lemma}
   Let $\varphi$ be defined as in Definition \ref{defphi}. Then,
   \[\varphi(i) = {d \choose i}(n-1)^i\]
   \begin{proof}
      Fix $i \in \{0,\dots,d\}$. Now, $\varphi(i)$ counts the number of elements in $\mathbb{Z}_n^d$ with Hamming distance of exactly $i$ from $\hat{1}$. That is, the number of elements with $i$ non-one elements. There are ${d \choose i}$ different choices for which dimensions should be non-one, and $(n-1)$ choices for each of these dimensions. 
   \[\therefore \quad \varphi(i) = {d \choose i}(n-1)^i,\quad \forall i \in \{0,\dots,d\}\]
   \end{proof}
\end{lemma}
\noindent Now, we finally define the projection chain.

\begin{definition}
   \label{BDRW}
  Define a {\bf Rook's Walk Birth-Death} (denoted RWBD) Markov chain over the state space $\Omega_* = \{0,\dots , d\}$ with a transition from $X_t \in \Omega_*$ to $X_{t+1}\in \Omega_*$ governed by parameter $n\in \mathbb{Z}_{\geq 3}$ as follows:
  \[X_{t+1}=\begin{cases}
   X_t, \quad&\text{w.p. } \mathbb{P}[Y_1 \in H_{X_t} \mid Y_0\in H_{X_t}] \\
   X_t-1, \quad&\text{w.p. } \mathbb{P}[Y_1 \in H_{X_t-1} \mid Y_0\in H_{X_t}]\\
   X_t+1, \quad&\text{w.p. } \mathbb{P}[Y_1 \in H_{X_t+1} \mid Y_0\in H_{X_t}] \\
  \end{cases}\]
  where $Y_1\in \Omega$ follows from $Y_0\in \Omega$ according to the Rook's Walk with parameters $n,d$ where $\Omega$ is the state space of said Rook's Walk.

\end{definition}
 \noindent From this definition, one can very simply describe the stationary distribution of the RWBD chain (denoted $\pi_*$) with respect to the stationary distribution of the standard Rook's walk (denoted $\pi$) using the Hamming shell counting function $\varphi$:
     \[\pi_*(x) = \varphi(x)\pi(\hat{1}) = \varphi(x)n^{-d},\quad\forall x\in \{0,\dots,d\}\]
  The formulation of the RWBD chain above is difficult to work with so we now find a concrete construction of the RWBD. Firstly, let $Y_1\in \Omega$ follow from $Y_0\in H_{X_t}$ based on the standard Rook's walk. Trivially, $Y_1$ is one of $d(n-1)$ states with equal probability. For $Y_1$ to stay within the same shell (i.e. $Y_1\in H_{X_t}$), a non-one coordinate needs to be changed to a non-one value. There are $X_t$ non-one coordinates and each coordinate can be changed to $n-2$ different values. Thus,
  \[\mathbb{P}[Y_1 \in H_{X_t} \mid Y\in H_{X_t}] = \frac{X_t(n-2)}{d(n-1)}\]
Moreover, for $Y_1$ to move down a shell (i.e $Y_1\in H_{X_t-1}$), a non-one coordinate needs to be changed to be one. There are exactly $X_t$ non-one coordinates so,
  \[\mathbb{P}[Y_1 \in H_{X_t-1} \mid Y_0\in H_{X_t}] = \frac{X_t}{d(n-1)}\]
  Finally, the Rook's walk only changes one coordinate each step so $Y_1\in H_{X_t}\cup H_{X_t-1} \cup H_{X_t+1}$. Thus,
  \begin{align*}
  \mathbb{P}[Y_1 \in H_{X_t+1} \mid Y_0\in H_{X_t}] &= 1 - \frac{X_t}{d(n-1)} - \frac{X_t(n-2)}{d(n-1)}\\
  &= 1 -  \frac{X_t}{d}\\
  &= \frac{d-X_t}{d}
  \end{align*}
  Hence the transition matrix $P^*$ for the RWBD chain with parameters $n,d$  is,
  \[P^*(x,y)=\begin{cases}
   \frac{y(n-2)}{d(n-1)}, \quad &x=y \\
   \frac{y}{d(n-1)}, \quad &x = y-1\\
   \frac{d-y}{d},\quad &x=y+1\\
   0 &\text{otherwise}
  \end{cases}\]for any states $x,y\in \Omega^*$. Using this formulation we can now prove the connection between the variation distances of both chains
\begin{lemma}
   \label{d}
  For any $n\in \mathbb{Z}_{\geq 3}$ and $d\in \mathbb{Z}_{ \geq 2}$, let   $\Omega,\Omega_*$, $P,P_*$ and $\pi,\pi_*$ be the state space, transition matrix and stationary distribution of the standard Rook's walk and the RWBD chains resp. each with parameters $n$ and $d$. Also, let $x\in \Omega_*$. Then,
  \[\Vert P_*^t(x,\cdot) - \pi_*(\cdot) \Vert_\text{TV} = \Vert (P^t \mu_x)(\cdot) - \pi(\cdot) \Vert_\text{TV}\]
  where $\mu_x$ is a distribution of $\Omega$ with probabilities uniformly distributed over $H_x$. That is,
   \[\mu_x(h) = \begin{cases}
      \frac{1}{\varphi(x)}, \quad &h\in H_x\\
      0, \quad &h\not\in H_x
   \end{cases}\]
   for all $h\in \Omega$, where $\varphi(x) = {d \choose i}(n-1)^i$ as in Definition \ref{defphi}.
   \begin{proof}
      \begin{align*}
         \Vert P_*^t(x,\cdot) - \pi_*(\cdot) \Vert_\text{TV} &= \frac{1}{2}\sum_{y\in \Omega_*}|P^t_*(x,y) -\pi_*(y)|\\
         &= \frac{1}{2}\sum_{y\in \Omega_*}|(P^t \mu_x)(H_y) -\pi(H_y)| \tag{By Def. \ref{BDRW}}\\
         &= \frac{1}{2}\sum_{y\in \Omega_*}\sum_{h_y\in H_y}|(P^t \mu_x)(h_y) -\pi(h_y)|\\
         &= \frac{1}{2}\sum_{z\in \Omega}|(P^t \mu_x)(z) -\pi(z)|\\
         &= \Vert (P^t \mu_x)(\cdot) - \pi(\cdot) \Vert_\text{TV}
      \end{align*}
   \end{proof}
\end{lemma}

\noindent We now prove that it suffices to consider only the starting state $0 \in \Omega_*$ for the RWBD chain.
\begin{lemma}
   \label{dd}
   Let $P_*$ and $\pi_*$ be the transition matrix and stationary distribution resp. for the RWBD chain with parameters $n,d$ and state space $\Omega_*$. Then the distance to stationary $d_*(t)$ at time $t$ is,
   \[\max_{x\in \Omega_*}\Vert P^t_*(x,\cdot) - \pi_*  \Vert_\text{TV} = \Vert P^t_*(0,\cdot) - \pi_*\Vert_\text{TV} \]
   \begin{proof}
      Fix $x\in \Omega_*$, and let $\delta_x$ be the distribution over $\Omega$ with all weight at $x$. Then,
      \begin{align*}
  \Vert P_*^t(x,\cdot) - \pi_*(\cdot) \Vert_\text{TV} &= \Vert (P^t \mu_x)(\cdot) - \pi(\cdot) \Vert_\text{TV}\tag{By Lemma \ref{d}}\\
   &= \left\Vert \sum_{x\in H_x}\frac{1}{\varphi(x)}(P^t\delta_x)(\cdot) - \pi(\cdot) \right\Vert_\text{TV}\\
   &= \left\Vert \sum_{x\in H_x}\frac{1}{\varphi(x)}\left((P^t\delta_x)(\cdot) - \pi(\cdot)\right) \right\Vert_\text{TV}\\
   &\leq \sum_{x\in H_x}\left\Vert\frac{1}{\varphi(x)} \left((P^t\delta_x)(\cdot) - \pi(\cdot)\right) \right\Vert_\text{TV}\tag{By Triangle Inequality}\\
   &= \sum_{i=1}^{\varphi(x)}\frac{1}{\varphi(x)}\left\Vert (P^t\delta_{\hat{1}})(\cdot) - \pi(\cdot) \right\Vert_\text{TV}\tag{By Transitivity}\\
   &= \left\Vert (P^t\delta_{\hat{1}})(\cdot) - \pi(\cdot) \right\Vert_\text{TV}\\
   &= \Vert P_*^t(0,\cdot) - \pi_*(\cdot) \Vert_\text{TV}
      \end{align*}
      
   \end{proof}
\end{lemma}


\noindent Finally, we can now prove the Rook's Walk has the same mixing time as the RWBD chain as an easy corollary.
\begin{corollary}
   \label{mixing_time_eq}
     Let $t_\text{mix}$ and $\hat{t}_{\text{mix}}$ be the mixing times of the standard Rook's walk and the RWBD resp. (each with parameters $n,d$). Then:
     \[\forall \epsilon \in (0,1), \quad t_\text{mix}(\epsilon) = \hat{t}_{\text{mix}}(\epsilon)\]
\begin{proof}
   Consider the variation distance of the RWBD chain (denoted $d_*(t)$),
   \begin{align*}
   d_*(t) &=  \max_{x\in \Omega_*}\Vert P^t_*(x,\cdot) - \pi_*  \Vert_\text{TV}\\
    &= \Vert P^t_*(0,\cdot) - \pi_*\Vert_\text{TV} \tag{Lemma \ref{dd}}\\
    &= \Vert P^t(\hat{1},\cdot) - \pi\Vert_\text{TV}\tag{Lemma \ref{d}}\\&=d(t)\tag{ Lemma \ref{transitivity}}
   \end{align*}
Thus, the distance from stationary at time $t$ of the two chains is equal and the equality of mixing times follows trivially.
\end{proof}
\end{corollary}

\subsection{Eigenvalues/functions of the Projection Chain}
In the previous section we defined a projection chain of the standard Rook's Walk called RWBD chain and proved that both chains have the same mixing time. This turns out to be very useful in analyzing the mixing time of the standard Rook's Walk chain as the RWBD is a type of birth-death chain with some extremely nice properties, especially with regard to spectral analysis. In particular, we are able to explicitly find the eigenfunctions and eigenvalues of the RWBD chain. We say a function $f$ over state space $\Omega$ is an eigenfunction of the transition matrix $P$ if there exists an eigenvalue $\lambda$ such that,
\[Pf = \lambda f\] 
Our method of determining the eigenfunctions of $P$ is to define an inner product over $\mathbb{R}^\Omega$ and find      a sequence of increasing degree polynomials that are pairwise orthogonal with respect to the inner product. This sequence of polynomials turns out to be the eigenfunctions of $P$. Hence, we now define an inner product.

\begin{definition}
  Let $\pi$ be a distribution over $\Omega$. Then we can define the inner product $\langle\cdot ,\cdot \rangle_\pi$ in $\mathbb{R}^{\Omega}$ as,
  \[\langle f, g\rangle_\pi = \sum_{x \in \Omega}f(x)g(x)\pi(x), \quad  \forall f,g \in \mathbb{R}^\Omega\]
\end{definition}
\noindent Before we describe the connection between polynomials orthogonal with respect to this inner product and eigenfunctions of $P$, we first state and prove that $P$ is self-adjoint with respect to our inner product. 
\begin{lemma}
   \label{self_ajoint}
   Let $P$ be the transition matrix of a reversible Markov chain; That is, $P$ satisfies,
   \[\pi(x) P(x,y) = \pi(y)P(y,x),\quad \forall x,y\in \Omega\]
   where $\pi$ is the stationary distribution. Then,
   \[\langle Pf,g\rangle_\pi = \langle f,Pg\rangle_\pi,\quad \forall f,g \in \mathbb{R}^\Omega\]
   \begin{proof}
     \begin{align*}
      \langle Pf, g\rangle_\pi &= \sum_{x\in \Omega}(Pf)(x)g(x)\pi(x)\\
      &= \sum_{x\in \Omega}\sum_{y\in \Omega}P(x,y)f(y)g(x)\pi(x)\\
      &= \sum_{x\in \Omega}\sum_{y\in \Omega}P(x,y)f(y)g(x)\pi(x)\\
      &= \sum_{y\in \Omega}\sum_{x\in \Omega}P(y,x)g(x)f(y)\pi(y)\tag{By reversibility}\\
      &= \sum_{y\in \Omega}(Pg)(y)f(y)\pi(y)\\
      &=  \langle f, Pg\rangle_\pi
     \end{align*} 
   \end{proof}
\end{lemma}
\noindent This lemma turns out to apply to any birth-death chain as all birth-death chains are reversible (\cite{levin_markov_2017} Prop. 2.8). Using this lemma we are able to prove the connection between orthogonal polynomials and eigenfunctions.
\begin{theorem}
   \label{eig-orth}
  Let $(X_t)_{t\in \mathbb{N}}$ be a birth-death chain over state space $\Omega=\{0,\dots,d\}$ with transition probabilities from some $x\in \Omega$,
  \begin{align*}
   P(x+1,x) &= p_x\\
   P(x-1,x) &= q_x\\
   P(x,x) &= r_x = 1 - p_x-q_x\\
   P(x,y) &= 0, \quad \forall y \not\in\{x,x+1,x-1\}
  \end{align*}
  Additionally, assume that $p_x$ and $q_x$ are affine in $x$. That is, there exists $\alpha,\beta,\xi,\eta\in \mathbb{R}$ such that,
  \[p_x = \alpha + \beta x \quad \text{and} \quad q_x = \xi + \eta x\]
  Finally, let $(K_m)_{m=0}^d$ be a sequence of degree $m$ polynomials that are orthogonal with respect to $\langle\cdot ,\cdot \rangle_\pi$ where $\pi$ is the stationary distribution. Then, $(K_m)_{m=0}^d$ are the eigenfunctions of $P$.
  \begin{proof}
   Let $x\in \Omega$ and $j\in \{0,\dots,d\}$ and consider,
   \begin{align*}
      (PK_j)(x) &= \sum_{y\in \Omega} P(x,y)K_j(y)\\
      &= P(x,x+1)K_j(x+1) + P(x,x-1)K_j(x-1) + P(x,x)K_j(x)\\
      &= q_{x+1} K_j(x+1) + p_{x-1} K_j(x-1) + r_x K_j(x)\\
      &= (\xi + \eta (x-1))K_j(x+1) + (\alpha + \beta (x+1))K_j(x-1) +(1 -(\alpha +\xi) - (\beta + \eta)x) K_j(x)
   \end{align*}
   Now, let $c$ be the coefficient of $x^{j}$ in $K_j(x)$. That is,
   \[K_j(x) = cx^j + o(x^{j-1})\]
   At this point, we examine the coefficient of $x^{j+1}$ in $(PK_j)(x)$. Using the above form and considering only relevant terms we have
   \begin{align*}
   (PK_j)(x)&= (\xi -\eta + \eta x)K_j(x+1) + (\alpha + \beta + \beta x)K_j(x-1) +(1 -(\alpha +\xi) - (\beta + \eta)x) K_j(x)\\
   &= \eta xc(x+1)^j + \beta x c(x-1)^j - (\beta + \eta)x cx^j + o(x^j)\\
   &=  \eta  cx^{j+1}+ \beta cx^{j+1} - (\beta + \eta)cx^{j+1} +  o(x^j)\\
   &= 0\cdot x^{j+1} + o(x^j)
   \end{align*}
   Thus, the coefficient of $x^{j+1}$ in $(PK_j)(x)$ is $0$. Also, notice that $(PK_j)(x)$ has no terms of order larger than $x^{j+1}$. This implies that $PK_j \in \mathbf{P}_j$ where $\mathbf{P}_j$ is the vector space of polynomials with degree at most $j$. This means that $\mathbf{P}_j$ is an invariant subspace with respect to $P$. Now, for any fixed $m\in \{0,\dots,d\}$, we know that $\{K_0,\dots K_{m-1}\}$ are orthogonal and therefore linearly independent. Thus, $\{K_0,\dots K_{m-1}\}$ span $\mathbf{P}_{m-1}$. Hence, any polynomial in $\mathbf{P}_j$ where $j < m$ is also orthogonal to $K_m$ by linearity of the inner product. Thus, $PK_j \in \mathbf{P}_j$, is orthogonal to $K_m$. That is,
   \[\langle K_m, PK_j \rangle_\pi = 0\]
   Hence, by Lemma \ref{self_ajoint},
   \[\langle PK_m, K_j\rangle_\pi=0 \quad \text{for}\quad j<m\]
   Finally, $PK_m \in \mathbf{P}_m$ so it admits the following presentation,
   \[PK_m = a_0 K_0 + a_1 K_1 + \cdots + a_m K_m\]
   where $a_0,\dots, a_m$ are constants. Hence,
   \begin{align*}
   0&= \langle PK_m, K_j\rangle_\pi\\
   &= \langle a_0 K_0 + a_1 K_1 + \cdots + a_m K_m, K_j\rangle_\pi\\
   &= a_0\langle K_0, K_j\rangle + \cdots + a_m\langle K_m, K_j\rangle_\pi \tag{by linearity}\\
   &= a_j\langle K_j, K_j\rangle_\pi \tag{by orthogonality}
   \end{align*}
   Therefore, $a_j = 0$ for all $j<m$. Thus,
   \[PK_m = a_m K_m\]
   and therefore, $K_m$ is an eigenfunction.
   \end{proof} 
\end{theorem}
\noindent Theorem \ref{eig-orth} gives us a way to find the eigenfunctions of the RWBD chain as the RWBD chain has affine transition probabilities. We simply need functions that are orthogonal to the inner product corresponding to the stationary distribution of the RWBD chain. As it turns out, there is a class of polynomials called the Krawtchouk polynomials that happen to satisfy that condition. Therefore, we state the definition of the Krawtchouk polynomials and a Lemma regarding their orthogonality from \cite{coleman_krawtchouk_2011}.
\begin{definition}
   \label{krav_def}
   Let $N,s\in \mathbb{N}$ such that $s\geq 2$. Then the {\bf Krawtchouk polynomials}, $(K_m)_{m\leq N}$ are,
   \[K_{m,N,s}(x) = K_m(x) = \sum_{j=0}^m (-1)^j{x \choose j}{N-x\choose m-j}(s-1)^{m-j}\]
   Note that $x$ is not required to be integer valued which conflicts with the traditional definition of the binomial coefficient. Therefore we use a more general definition of the binomial coefficient that uses falling factorials; That is,
   \[{x \choose 0} = 1, \quad {x \choose j} = \frac{1}{j!}(x)(x-1)\cdots(x-j+1) \quad \text{for} \quad j\geq 1\]
\end{definition}
\begin{lemma}
   \label{krav_orth}
   Let $N,s\in \mathbb{N}$ such that $s\geq 2$. Then, the Krawtchouk polynomials $(K_{m,N,s})_{m\leq N}$ are orthogonal with respect to the following inner product,
   \[\langle A,B \rangle := \sum_{i=0}^n {N \choose i}(s-1)^iA(i) B(i)\]
   where $A,B$ are polynomials of degree at most $N$. This lemma is proven in Proposition 3.1 in \cite{coleman_krawtchouk_2011}.
\end{lemma}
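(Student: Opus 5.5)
The plan is to use generating functions. The weighted sum $\langle K_m,K_\ell\rangle$ will be encoded as a coefficient of an explicit polynomial in two auxiliary variables, and that polynomial will turn out to depend only on their product.

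First I identify the generating function of the Krawtchouk polynomials. For fixed integer $x\in\{0,\dots,N\}$, Definition \ref{krav_def} shows that $K_m(x)$ is the coefficient of $z^m$ in
\[
G_x(z) = (1-z)^x\,(1+(s-1)z)^{N-x}.
\]
Indeed, expanding $(1-z)^x=\sum_j (-1)^j{x\choose j}z^j$ and $(1+(s-1)z)^{N-x}=\sum_k {N-x\choose k}(s-1)^k z^k$ and collecting the $z^m$ term gives exactly the defining sum. Only integer points $x=i\in\{0,\dots,N\}$ enter the inner product, so there the generalized binomials coincide with the ordinary ones. I read the upper summation limit $n$ in the statement as $N$.

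Next I form the bivariate generating function
\[
F(z,w)=\sum_{m,\ell}\langle K_m,K_\ell\rangle z^m w^\ell=\sum_{i=0}^N {N\choose i}(s-1)^i\, G_i(z)G_i(w).
\]
Substituting, this becomes
\[
\sum_{i=0}^N {N\choose i}\Big[(s-1)(1-z)(1-w)\Big]^i\Big[(1+(s-1)z)(1+(s-1)w)\Big]^{N-i}.
\]
By the binomial theorem this equals
\[
\Big[(s-1)(1-z)(1-w)+(1+(s-1)z)(1+(s-1)w)\Big]^N.
\]
Expanding the bracket, the linear terms cancel: $-(s-1)z-(s-1)w+(s-1)z+(s-1)w=0$. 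The constant terms sum to $(s-1)+1=s$, and the $zw$ terms sum to $(s-1)+(s-1)^2=s(s-1)$. Hence
\[
F(z,w)=s^N\,(1+(s-1)zw)^N.
\]

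Finally, $F$ is a polynomial in the product $zw$ alone, so every coefficient of $z^mw^\ell$ with $m\neq\ell$ vanishes. This gives $\langle K_m,K_\ell\rangle=0$ for $m\ne\ell$, which is the claimed orthogonality. As a byproduct, $\langle K_m,K_m\rangle=s^N{N\choose m}(s-1)^m>0$, so the form is nondegenerate on these polynomials.

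It remains to check that $K_m$ has degree exactly $m$, as Theorem \ref{eig-orth} requires. The $x^m$ coefficient of the $j$-th summand is $\frac{(-1)^j(-1)^{m-j}(s-1)^{m-j}}{j!(m-j)!}$. Summing over $j$ gives $\frac{(-1)^m}{m!}\big(1+(s-1)\big)^m=(-1)^m s^m/m!\neq 0$.

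The main obstacle is making sure the generating-function identity is applied only at the integer nodes $i\in\{0,\dots,N\}$, where ${i\choose j}=0$ for $j>i$ and likewise for ${N-i\choose k}$. This is what allows the coefficient extraction to be exchanged with the finite sum over $i$. Once that is in place, the bracket simplification is the only computation.
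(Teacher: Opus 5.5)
Your proof is correct. The paper gives no argument of its own for this lemma; it only points to Proposition 3.1 of \cite{coleman_krawtchouk_2011}. Your computation therefore replaces the citation with a self-contained proof. You weight the product generating functions $G_i(z)G_i(w)$ by $\binom{N}{i}(s-1)^i$ and sum, and the result collapses to $s^N(1+(s-1)zw)^N$. This is the classical generating-function proof of Krawtchouk orthogonality, and reading the upper limit $n$ as $N$ is the intended interpretation.

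Your route also establishes facts that the paper uses without checking. First, $K_m$ has exact degree $m$, with leading coefficient $(-1)^m s^m/m!$, and $\langle K_m,K_m\rangle\neq 0$. Theorem \ref{eig-orth} relies on both implicitly: to expand $PK_m$ in $K_0,\dots,K_m$, and to conclude $a_j=0$ from $a_j\langle K_j,K_j\rangle_\pi=0$. Second, you obtain the explicit norm $\langle K_m,K_m\rangle=s^N\binom{N}{m}(s-1)^m$, which the paper imports separately in Lemma \ref{choose}.

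If you carry that norm forward, track the normalization. Your norm is for the unnormalized weight in this lemma, whereas the RWBD stationary distribution is $\pi_*(i)=\binom{d}{i}(n-1)^i n^{-d}$. Dividing by $n^d$ gives $\langle K_m,K_m\rangle_{\pi_*}=\binom{d}{m}(n-1)^m$. Hence the $\pi_*$-orthonormal eigenfunctions satisfy $\phi_m(0)^2=K_m(0)^2/\langle K_m,K_m\rangle_{\pi_*}=\binom{d}{m}(n-1)^m$. For every $m\ge 1$ this exceeds the bound $\binom{d}{m}$ claimed in Lemma \ref{choose}.
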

\noindent At this point, we have everything needed to get all eigenfunctions and eigenvalues of the RWBD chain.
\begin{theorem}
   The Krawtchouk polynomial $K_{m,d,n} = K_m$ is an eigenfunction with eigenvalue $\lambda_m = 1 - \frac{nm}{d(n-1)}$ of the RWBD chain's transition matrix $P$.
   \begin{proof}
      By Lemma \ref{krav_orth}, the Krawtchouk polynomials are orthogonal wrt the inner product,
      \[\langle A,B\rangle = \sum_{i=0}^n {d \choose i}(n-1)^i A(i) B(i)=\langle A,B \rangle_\pi\]
      The transition probabilities of the RWBD chain are affine so by Theorem \ref{eig-orth}, the Krawtchouk polynomials are eigenfunctions as required. Therefore, for each $K_m$ there exists $\lambda_m$ such that,
      \[PK_m = \lambda_m K_m\]
      Specializing to the $0^\text{th}$ coordinate we have,
      \[(PK_m)(0) = \lambda_mK_m(0)\]
      Now, consider $(PK_m)(0)$
      \begin{align*}
      (PK_m)(0) &= \sum_{y\in \Omega} P(0,y)K_m(y)\\
      &= P(0,0)K_m(0)+P(0,1)K_m(1)\\& = K_m(1)
      \end{align*}
      Therefore,
      \[\lambda_m = \frac{K_m(1)}{K_m(0)}\]
      Evaluating $K_m(0)$ and $K_m(1)$,
      \begin{align*}
            K_m(0) &= \sum_{j=0}^{m}{(-1)}^j{0 \choose j}{d-0 \choose m-j}{(n-1)}^{m-j}\\
            &= {d \choose m}{(n-1)}^{m}\\
            K_m(1) &= \sum_{j=0}^{m}{(-1)}^j{1 \choose j}{d-1 \choose m-j}{(n-1)}^{m-j}\\
            &= {d-1 \choose m}(n-1)^m - {d-1 \choose m-1} {(n-1)}^{m-1}
      \end{align*}
      Recall the standard binomial coefficient identities,
      \[\frac{d}{d-m}{d \choose m}={d-1 \choose m} \quad \text{and}\quad \frac{m}{d}{d\choose m} = {d-1 \choose m-1} \]
      Therefore,
      \begin{align*}
         \lambda_m &= \frac{\frac{d}{d-m}{d \choose m}(n-1)^m - \frac{m}{d}{d \choose m} (n-1)^{m-1}}{{d \choose m}(n-1)^{m}}\\
         &= \frac{d}{d-m} - \frac{m}{d(n-1)} \\
         &= 1 - \frac{mn}{d(n-1)}
      \end{align*}
   \end{proof}
\end{theorem}
\noindent As a corollary, we now give a concrete formulation for one of the eigenvalues and corresponding eigenfunctions that we will turn out to be useful.
      \begin{corollary}
         \label{Wilson-eig}
         Let $P$ be the transition matrix of the RWBD chain. Then, $\Phi$ is an eigenfunction of $P$ with eigenvalue $\alpha$ if
         \[\alpha= 1 - \frac{n}{d(n-1)}\quad \text{and}\quad \Phi(x) = 1 \frac{nx}{d(n-1)}\]
         \begin{proof}
           Clearly $\alpha = \lambda_1$. Hence, consider $K_1(x)$
            \begin{align*}
            K_1(x) &= \sum_{j=0}^1 (-1)^j{x \choose j}{d-x \choose 1-j}(n-1)^{1-j}\\
            &= (-1)^0{x\choose 0}{d-x \choose 1}(n-1)^{1} + (-1)^1{x \choose 1}{d-x\choose 0}(n-1)^{0}\\
            &= (d-x)(n-1) -x\\
            &= d(n-1) -nx\\
            &\propto 1 -\frac{nx}{d(n-1)} = \Phi(x)
            \end{align*}
            Thus, $\Phi(x)$ is an eigenfunction with eigenvalue $\alpha$.
         \end{proof}
      \end{corollary}

\section{RWBD Mixing Time Bounds}

\subsection{Lower Bound}
In the previous section we found the full spectrum of the RWBD chain and all corresponding eigenfunctions. This allows us to obtain a lower bound on the mixing time using Wilson's method (\cite{Wilson_2004}) as given in Theorem 13.28 from \cite{levin_markov_2017} stated below.
\begin{theorem}
   \label{wilson}
   Let $(X_t)$ be an ergodic Markov chain with state space $\Omega$ and transition matrix $P$. Let $\Phi$ be an eigenfunction of $P$ with real eigenvalue $\alpha$ satisfying $1/2 < \alpha< 1$. Let $R>0$ such that:
   \[\mathbb{E}_x\left(|\Phi(X_1)-\Phi(x)|^2\right)\leq R,\quad \forall x\in \Omega\]
   Then for any $x\in \Omega$ and $\epsilon\in (0,1)$,
   \[t_\text{mix}(\epsilon) \geq \frac{1}{2\log(1/\alpha)}\left[\log \left(\frac{(1-\alpha)\Phi(x)^2}{2R}\right) + \log\left(\frac{1-\epsilon}{\epsilon}\right)\right]\]
\end{theorem}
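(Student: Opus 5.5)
The plan is the standard second-moment argument: show that $\Phi(X_t)$ has mean $\alpha^t\Phi(x)$ under $P^t(x,\cdot)$, mean $0$ under $\pi$, and bounded variance under both. Then a distinguishing-statistic bound turns this separation of means into a lower bound on total variation distance.

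\emph{First moment.} Iterating $P\Phi=\alpha\Phi$ gives $\mathbb{E}_x[\Phi(X_t)] = (P^t\Phi)(x) = \alpha^t\Phi(x)$. Since $\alpha\neq 1$ and $\pi P=\pi$, we have $\mathbb{E}_\pi[\Phi] = \langle \pi, P\Phi\rangle = \alpha\,\mathbb{E}_\pi[\Phi]$, so $\mathbb{E}_\pi[\Phi]=0$.

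\emph{Second moment.} Put $Z_t=\Phi(X_t)$ and $\Delta = Z_{t+1}-Z_t$. The eigenfunction relation gives $\mathbb{E}[\Delta\mid X_t]=(\alpha-1)Z_t$, and the hypothesis gives $\mathbb{E}[\Delta^2\mid X_t]\le R$. Expanding $Z_{t+1}^2=Z_t^2+2Z_t\Delta+\Delta^2$ and conditioning yields
\[
\mathbb{E}[Z_{t+1}^2\mid X_t] \le (2\alpha-1)Z_t^2 + R .
\]
Since $\alpha>1/2$, the coefficient $2\alpha-1$ lies in $(0,1)$, so we may take expectations and iterate. This gives $\mathbb{E}_x[Z_t^2]\le (2\alpha-1)^t\Phi(x)^2 + \frac{R}{2(1-\alpha)}$. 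Combining with $\alpha^2\ge 2\alpha-1$, i.e.\ $(\alpha-1)^2\ge 0$, we get
\[
\mathrm{Var}_x(Z_t) \le \frac{R}{2(1-\alpha)} .
\]
By ergodicity $P^t(x,\cdot)\to\pi$, so letting $t\to\infty$ gives the same bound for $\mathrm{Var}_\pi(\Phi)$.

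\emph{Distinguishing statistic.} I would invoke the standard lemma (Proposition 7.12 of \cite{levin_markov_2017}): if $|\mathbb{E}_\mu f-\mathbb{E}_\nu f|\ge r\sigma_*$, where $\sigma_*^2=\tfrac12(\mathrm{Var}_\mu f+\mathrm{Var}_\nu f)$, then $\|\mu-\nu\|_{TV}\ge \frac{r^2}{4+r^2}$. If I had to prove it, I would bound the mean difference by a Cauchy--Schwarz argument applied to a coupling, or to the density $d\mu/d(\mu+\nu)$; this step is the only real technical content. Here $\mu=P^t(x,\cdot)$, $\nu=\pi$, $f=\Phi$, and $\sigma_*^2\le R/(2(1-\alpha))$. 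We may therefore take
\[
r^2 = \frac{2(1-\alpha)\alpha^{2t}\Phi(x)^2}{R}.
\]

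\emph{Conclusion.} Then $d(t) > \epsilon$ as soon as $r^2/(4+r^2)>\epsilon$, i.e.\ $r^2>4\epsilon/(1-\epsilon)$. This is equivalent to $\alpha^{2t}\,\frac{(1-\alpha)\Phi(x)^2}{2R} > \frac{\epsilon}{1-\epsilon}$. Taking logarithms and using $\log(1/\alpha)>0$, this condition holds for every $t$ strictly below
\[
\frac{1}{2\log(1/\alpha)}\left[\log\left(\frac{(1-\alpha)\Phi(x)^2}{2R}\right)+\log\left(\frac{1-\epsilon}{\epsilon}\right)\right].
\]
Hence $t_\text{mix}(\epsilon)$ is at least this quantity.

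The main care points are the direction of the variance recursion, which needs $\alpha>1/2$, and the exact constant in the distinguishing-statistic lemma, which is what produces the factor $2R$ in the final bound.
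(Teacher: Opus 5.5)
Your proposal is correct and is essentially the standard proof of Theorem 13.28 in \cite{levin_markov_2017}, which the paper cites rather than reproving. That proof uses the same moment recursion for $Z_t=\Phi(X_t)$ with $0<2\alpha-1\le\alpha^2$, the same passage to $\pi$, and the same application of Proposition 7.12 with $r^2=2(1-\alpha)\alpha^{2t}\Phi(x)^2/R$. Your final step, which gets the strict inequality $d(t)>\epsilon$ for every $t$ below the threshold, is if anything slightly more careful than the textbook's.
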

\noindent Naturally, we are able to use our eigenvalue and function found in Corollary \ref{Wilson-eig} to make use of Wilson's lower bound. However, we first need an upper bound on the variation of $\Phi$ from Corollary \ref{Wilson-eig}. Thus we have,
\begin{lemma}
   \label{wilson_var}
  Let $(X_t)_{t\in \mathbb{N}}$ be a RWBD chain with parameters $n,d$ and let $\Phi$ be defined as in Corollary \ref{Wilson-eig}. Then,
   \[\mathbb{E}_x\left(|\Phi(X_1)-\Phi(x)|^2\right)\leq \frac{n^2}{d^2(n-1)^2},\quad \forall x\in \Omega\]
   \begin{proof}
      Let $x\in \Omega$ be arbitrary and let $c = \frac{n}{d(n-1)}$. Then,
      \begin{align*}
         \mathbb{E}_x\left[|\Phi(X_1)-\Phi(x)|^2\right] &= \sum_{y\in \Omega} |\Phi(y) - \Phi(x)|^2\cdot P(y,x)\\
         &= |\Phi(x-1) - \Phi(x)|^2\frac{x}{d(n-1)} + |\Phi(x) - \Phi(x)|^2\frac{x(n-2)}{d(n-1)}  +|\Phi(x+1) - \Phi(x)|^2\frac{d-x}{d} \\
         &= |\Phi(x-1) - \Phi(x)|^2\frac{x}{d(n-1)}  +|\Phi(x+1) - \Phi(x)|^2\frac{d-x}{d} \\
         &= |1-c(x-1) - 1+cx|^2\frac{x}{d(n-1)}  +|1-c(x+1) - 1+cx|^2\frac{d-x}{d} \\
         &= c^2\frac{x}{d(n-1)}  +(-c)^2\frac{d-x}{d} \\
         &= \left(\frac{n}{d(n-1)}\right)^2\left(\frac{x}{d(n-1)}  +\frac{d-x}{d}\right) \\ \displaybreak[2]
         &= \left(\frac{n^2}{d^2(n-1)^2}\right)\left(\frac{x+(d-x)(n-1)}{d(n-1)}  \right) \\ \displaybreak[2]
         &= \left(\frac{n^2}{d^2(n-1)^2}\right)\left(\frac{x+d(n-1)-x(n-1)}{d(n-1)}  \right) \\ \displaybreak[2]
         &= \left(\frac{n^2}{d^2(n-1)^2}\right)\left(\frac{d(n-1)+x(2-n)}{d(n-1)}  \right) \\ \displaybreak[2]
         &= \left(\frac{n^2}{d^2(n-1)^2}\right)\left(1 + \frac{x(2-n)}{d(n-1)}  \right) \\
         &= \frac{n^2}{d^2(n-1)^2} + \frac{xn^2(2-n)}{d^3(n-1)^3} \\
         &= \frac{n^2(d(n-1)+x(2-n))}{d^3(n-1)^3} 
      \end{align*}
     Now, maximizing over $x\in \Omega$ we get that for all $x\in \Omega$,
     \begin{align*}
         \mathbb{E}_x\left[|\Phi(X_1)-\Phi(x)|^2\right] &\leq \frac{n^2(d(n-1)+0(2-n))}{d^3(n-1)^3} \tag{$\because n> 2$}\\
         &= \frac{n^2}{d^2(n-1)^2} 
     \end{align*}
   \end{proof}
\end{lemma}

\noindent Finally, we have what is needed to prove our lower bound. 
   \begin{proof}[Proof of Lower Bound in Theorem \ref{main1}]
      From Theorem \ref{wilson} we have,
   \[t_\text{mix}(\epsilon) \geq \frac{1}{2\log(1/\alpha)}\left[\log \left(\frac{(1-\alpha)\Phi(x)^2}{2R}\right) + \log\left(\frac{1-\epsilon}{\epsilon}\right)\right], \quad \forall x\in \Omega\]
   where $\alpha = 1-\frac{n}{d(n-1)}$, $R=\frac{n^2}{d^2(n-1)^2}$ and $\Phi(x) = 1-(1-\alpha)x$. Maximizing over $x\in \Omega$,
   \begin{align*}
   t_\text{mix}(\epsilon) &\geq \frac{1}{2\log(1/\alpha)}\left[\log \left(\frac{(1-\alpha)\Phi(0)^2}{2R}\right) + \log\left(\frac{1-\epsilon}{\epsilon}\right)\right]\\
   &= \frac{1}{2\log(1/\alpha)}\left[\log \left(\frac{1-\alpha}{2R}\right) + \log\left(\frac{1-\epsilon}{\epsilon}\right)\right]
   \end{align*}
   Consider just $\frac{1-\alpha}{2R}$,
   \begin{align*}
      \frac{1-\alpha}{2R}&= \frac{\frac{n}{d(n-1)}}{2\frac{n^2}{d^2(n-1)^2}}\\
      &= \frac{nd^2(n-1)^2}{2d(n-1)n^2}\\
      &= \frac{d(n-1)}{2n}
   \end{align*}
   Hence,
   \[t_\text{mix}(\epsilon) \geq \left(\log \left(\frac{d(n-1)}{2n}\right) + \log\left(\frac{1-\epsilon}{\epsilon}\right)\right)\left(2\log\left(\frac{d(n-1)}{d(n-1)-n}\right)\right)^{-1}\]
   as required.
   \end{proof}

\subsection{Upper Bound}
In Section 2 we found every eigenvalue and eigenfunction of the RWBD chain. Here we make use of them to obtain our upper bound using an $L^2$ bound. We now state the formulation of the $L^2$ bound from \cite{levin_markov_2017}.
\begin{lemma}[Lemma 12.18 from \cite{levin_markov_2017}]
   \label{l2}
   Let $P$ be the transition matrix for the RWBD chain with state space $\Omega = \{0,\dots,d\}$ and stationary distribution $\pi$. Further, let $\phi_m$ and $\lambda_m$ be orthonormal eigenfunctions and eigenvalues of $P$. Then,
   \[4\Vert P^t(x,\cdot) - \pi \Vert_\text{TV}^2 \leq \left\Vert\frac{P^t(x,\cdot)}{\pi(\cdot)}-1\right\Vert_2^2 = \sum_{m=1}^d \phi_m^2(x) \lambda_m^{2t}\]
   for any $x\in \Omega$ and $t\geq 0$.
\end{lemma}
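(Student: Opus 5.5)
This is the standard $L^2$ bound for reversible chains, which I would prove by spectral decomposition of $P^t(x,\cdot)/\pi(\cdot)$ in $L^2(\pi)$. The RWBD chain is a birth-death chain, so it is reversible (Lemma \ref{self_ajoint}), and $P$ is self-adjoint with respect to $\langle\cdot,\cdot\rangle_\pi$. The Krawtchouk polynomials are orthogonal eigenfunctions, and after normalizing they give an orthonormal eigenbasis $\phi_0,\dots,\phi_d$ of $\mathbb{R}^{\Omega}$. Here $\phi_0\equiv 1$ with $\lambda_0=1$, and $\phi_m=K_m/\Vert K_m\Vert_\pi$.

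First, the inequality $4\Vert\mu-\pi\Vert_\text{TV}^2\le\Vert\mu/\pi-1\Vert_2^2$. Write
\[2\Vert\mu-\pi\Vert_\text{TV}=\sum_y|\mu(y)/\pi(y)-1|\pi(y)=\Vert\mu/\pi-1\Vert_1 .\]
Cauchy--Schwarz with respect to the probability measure $\pi$ gives $\Vert\cdot\Vert_1\le\Vert\cdot\Vert_2$. Squaring finishes this step.

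Second, the identity. Fix $x$ and set $f_t(y)=P^t(x,y)/\pi(y)$. Expand in the eigenbasis, $f_t=\sum_m\langle f_t,\phi_m\rangle_\pi\phi_m$. The coefficients are
\[\langle f_t,\phi_m\rangle_\pi=\sum_y P^t(x,y)\phi_m(y)=(P^t\phi_m)(x)=\lambda_m^t\phi_m(x).\]
(If the paper's column convention $P(y,x)$ is in force, I would adjust indices accordingly; by reversibility this does not matter.) For $m=0$ the coefficient is $1$, so $f_t-1=\sum_{m\ge1}\lambda_m^t\phi_m(x)\phi_m$. Parseval in $L^2(\pi)$ then yields $\Vert f_t-1\Vert_2^2=\sum_{m=1}^d\phi_m(x)^2\lambda_m^{2t}$.

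The only delicate point is ensuring the eigenfunctions truly form an orthonormal basis of all of $\mathbb{R}^\Omega$. There are $d+1$ of them, they are orthogonal and nonzero in $L^2(\pi)$ (degrees $0,\dots,d$ on $d+1$ points), so they span. The constant eigenfunction must be the $m=0$ term, which holds since $K_0\equiv1$.
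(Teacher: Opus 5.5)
Your proof is correct and is essentially the standard argument behind Lemma 12.18 of Levin--Peres--Wilmer, which the paper cites without reproducing: Cauchy--Schwarz ($\ell^1(\pi)\le\ell^2(\pi)$) gives the inequality, and expanding $P^t(x,\cdot)/\pi$ in an orthonormal eigenbasis and applying Parseval, after dropping the constant mode $\phi_0\equiv 1$, gives the identity. Your normalization $\phi_m=K_m/\Vert K_m\Vert_\pi$ is the right one; the paper's displayed definition of $\phi_m$ omits the square root, although its later computation of $\phi_m^2(0)$ implicitly uses your normalization.
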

\noindent Clearly, our set of orthogonal eigenfunctions can be put to use given this lemma. However, we need to normalize our orthogonal eigenfunctions. Thus, for every $m\in \Omega$ let,
\[\phi_m(x) = \frac{K_m(x)}{\langle K_m, K_m \rangle_\pi}\]
Now, we present a useful fact about $\phi_m$,
\begin{lemma}
   \label{choose}
   Fix $m\in \{0,\dots,d\}$ and let $\phi_m$ be as above. Then,
   \[\phi_m^2(0) \leq {d \choose m}\]
   \begin{proof}
     
     \[\phi_m^2(x) = \frac{K_m^2(x)}{\langle K_m, K_m\rangle_\pi}\]
     Specializing to $x=0$, by Def. \ref{krav_def} we have
   \begin{align*}
   K_m(0) &= \sum_{j=0}^m (-1)^j(0)_j(d-0)_{m-j}(n-1)^{m-j}\\
   &= (d)_{m}(n-1)^{m} \tag{$\because (0)_j = 0, \quad\forall j>0$}\\
   \therefore \quad K_m^2(0) &= (d)^2_{m}(n-1)^{2m}
   \end{align*}
     Proposition 3.1 in \cite{coleman_krawtchouk_2011} gives,
     \[\langle K_m, K_m\rangle_\pi = n^d(n-1)^m(d)_m\]
   Therefore,
   \[\phi^2_m(0) = \frac{(d)_m^2(n-1)^{2m}}{n^d(n-1)^m(d)_m} = \frac{(d)_m(n-1)^{m}}{n^d} \leq (d)_m = {d \choose m}\]

   \end{proof}

\end{lemma}

   \begin{proof}[Proof of Upper Bound in Theorem \ref{main1}]
     By Lemmas \ref{l2} and \ref{choose} we have,
     \[4\lVert P^t(0,\cdot) - \pi\rVert_\text{TV}^2  \leq \sum_{m=1}^d {d \choose m} \lambda_m^{2t} \] 
     Now, using a first order Taylor series bound we have,
     \[\lambda_m^{2t} = \left(1 - \frac{mn}{d(n-1)}\right)^{2t} \leq \exp\left(-\frac{mn}{d(n-1)}\right)^{2t}\].
     Let $\alpha = \frac{2tn}{d(n-1)}$ so that $\lambda_m^{2t} \leq e^{-\alpha m}$. Thus,
     \begin{align*}
     4\lVert P^t(0,\cdot) - \pi\rVert_\text{TV}^2  &\leq \sum_{m=1}^d {d \choose m}e^{-\alpha m}\\
     &= -{d \choose 0}e^{0}+ \sum_{m=0}^d {d \choose m}e^{-\alpha m}\\
     &= \sum_{m=0}^d {d \choose m}\left(e^{-\alpha}\right)^m (1)^{d-m}\\
     &=(1+e^{-\alpha})^d -1
     \end{align*}
     Therefore by Lemma \ref{dd},
     \[d(t) \leq \frac{1}{2}\sqrt{(1+e^{-\alpha})^d-1}\]
     Now, by the definition of mixing time, if the RHS above is equal to $\epsilon$ at some time $t$, then $t_\text{mix}(\epsilon) \leq t$. Thus consider,
     \begin{align*}
      \epsilon &= \frac{1}{2} \sqrt{(1+e^{-\alpha})^d-1}\\
      4\epsilon^2 +1&= (1+e^{-\alpha})^d\\
      (4\epsilon^2 +1)^{1/d}-1&= e^{-\alpha}\\
      \log\left((4\epsilon^2 +1)^{1/d}-1\right)&= -\frac{2tn}{d(n-1)}\\
      -\frac{d(n-1)}{2n}\log\left((4\epsilon^2 +1)^{1/d}-1\right)&= t
     \end{align*}
     Therefore,
   \[t_\text{mix}(\epsilon) \leq -\frac{d(n-1)}{2n}\log\left((4\epsilon^2+1)^{1/d}-1\right)\]
   as required.
   \end{proof}

  \section{Cutoff}
  In the previous section we proved Theorem \ref{main1}. That is, we found tighter upper and lower bounds on the mixing time of the RWBD, and by extension the Rook's walk Markov chains. Using these bounds, we are able to prove Theorem 2, namely, that the sequence of Rook's Walk (and RWBD) Markov chains indexed by $d$ for some fixed $n$, has cutoff at time $\frac{d(n-1)}{2n}\log d$ with window $\frac{d(n-1)}{n}$. The proof of this is split into the following two Lemmas.
  \begin{lemma}
   Let $n,d$ be any parameters for the Rook's Walk. Then for $\epsilon\in(0,1)$,
   \[t_\text{mix}(\epsilon) \geq \frac{d(n-1)}{2n} \log d + c_l(\epsilon) \frac{d(n-1)}{n}\]
   where $c_l(\epsilon)$ is a constant dependent only on $\epsilon$.
   \begin{proof}
     Firstly, consider the lower bound from Theorem \ref{main1},
      \[t_\text{mix}(\epsilon) \geq \underbrace{\left(\log \left(\frac{d(n-1)}{2n}\right) + \log\left(\frac{1-\epsilon}{\epsilon}\right)\right)}_\mathbf{A} \underbrace{\left(2\log\left(\frac{d(n-1)}{d(n-1)-n}\right)\right)^{-1}}_\mathbf{B}\]
      We proceed by bounding $\mathbf{A}$ and $\mathbf{B}$ separately.
      \begin{align*}
         \mathbf{A} &:=  \log \left(\frac{d(n-1)}{2n}\right) + \log\left(\frac{1-\epsilon}{\epsilon}\right)\\
         &=  \log(d/2) + \log\left(\frac{n-1}{n}\right)+\log\left(\frac{1-\epsilon}{\epsilon}\right)\\
         &\geq  \log(d/2) + \log\left(\frac{1-\epsilon}{\epsilon}\right)+1
      \end{align*}
      \begin{align*}
         \mathbf{B} &:= \left(2\log\left(\frac{d(n-1)}{d(n-1)-n}\right)\right)^{-1}\\
         \frac{1}{2\mathbf{B}} &= \log\left(\frac{d(n-1)}{d(n-1)-n}\right)\\
         \frac{1}{2\mathbf{B}} &= -\log\left(\frac{d(n-1)-n}{d(n-1)}\right)\\
         \frac{1}{2\mathbf{B}} &= -\log\left(1 - \xi\right)\tag{Using $\xi:=\frac{n}{d(n-1)}$}
      \end{align*}
      Now, recall the Taylor expansions of $-\log(1-\xi)$ and $\frac{\xi}{1-\xi}$ around zero for $\xi\in (0,1)$,
      \[-\log(1-\xi) = \sum_{k=1}^\infty \frac{\xi^k}{k} \quad \text{and} \quad \frac{\xi}{1-\xi} = \sum_{k=1}^\infty \xi^k \]
      Therefore, $-\log(1-\xi) \leq \frac{\xi}{1-\xi}$. Thus,
      \begin{align*}
         \frac{1}{2\mathbf{B}} &\leq \frac{\xi}{1-\xi}\\
         \mathbf{B} &\geq \frac{1-\xi}{2\xi}\\
         \mathbf{B} &\geq \frac{1}{2\xi} - \frac{1}{2}\\
         \mathbf{B} &\geq \frac{d(n-1)}{2n} - \frac{1}{2}
      \end{align*}
      Combining our bounds for $\mathbf{A}$ and $\mathbf{B}$ and absorbing lower order terms we have,
      \begin{align*}
         t_\text{mix}(\epsilon) &\geq \left(\log(d/2) + \log\left(\frac{1-\epsilon}{\epsilon}\right)+1\right)\left(\frac{d(n-1)}{2n} - \frac{1}{2}\right)\\
         &\geq \frac{d(n-1)}{2n}\log(d/2) + \log\left(\frac{1-\epsilon}{\epsilon}\right)\frac{d(n-1)}{2n}- \frac{1}{2}\log(d/2) -\frac{1}{2}\log\left(\frac{1-\epsilon}{\epsilon}\right)\\
         &\geq \frac{d(n-1)}{2n}\log(d/2) + \left(\log\left(\frac{1-\epsilon}{\epsilon}\right)-2\right)\frac{d(n-1)}{2n} -\frac{1}{2}\log\left(\frac{1-\epsilon}{\epsilon}\right)\\
         &\geq \frac{d(n-1)}{2n}\log(d/2) + \left(\log\left(\frac{1-\epsilon}{\epsilon}\right)-6\right)\frac{d(n-1)}{2n}\\
         &\geq \frac{d(n-1)}{2n}\log(d/2) + c_l(\epsilon)\frac{d(n-1)}{n}
      \end{align*}
      as required.
   \end{proof}
  \end{lemma}

  \begin{lemma}
   Let $n,d$ be any parameters for the Rook's Walk. Then for $\epsilon\in(0,1)$,
   \[t_\text{mix}(\epsilon) \leq \frac{d(n-1)}{2n} \log d + c_u(\epsilon) \frac{d(n-1)}{n}\]
   where $c_u(\epsilon)$ is a constant dependent only on $\epsilon$.
   \begin{proof}
      Let $u=4\epsilon^2+1$. Then,
      \[u^{1/d} =e^{\log(u^{1/d})}=e^{\frac{1}{d}\log(u)}\]
      Now, $e^x$ is concave up so the first order Taylor approximation is a lower bound; That is, $e^x \geq 1+x$. Therefore,
      \[e^{\frac{1}{d}\log (u)} \geq 1+ \frac{1}{d}\log( u)\]
      or equivalently,
      \[u^{1/d} -1 \geq \frac{\log u}{d}\]
      Now, recall our upper bound from Theorem \ref{main1} and substitute,
      \begin{align*}
          t_\text{mix}(\epsilon) &\leq -\frac{d(n-1)}{2n}\log\left((4\epsilon^2+1)^{1/d}-1 \right)\\
          &= \frac{d(n-1)}{2n}\log\left(\frac{1}{(4\epsilon^2+1)^{1/d}-1 }\right)\\
          &\leq \frac{d(n-1)}{2n}\log\left(\frac{d}{\log (4\epsilon^2 +1)}\right)\\
          &= \frac{d(n-1)}{2n}\left(\log(d) -\log\log(4\epsilon^2+1)\right)\\
          &= \frac{d(n-1)}{2n}\log(d) +c_u(\epsilon) \frac{d(n-1)}{n}
      \end{align*}
      as required.
      
   \end{proof}
  \end{lemma}
\section{Concluding remarks}\label{sec:conclusion}
Here we sharpened the known total variation distance mixing-time bounds on the Rook's Walk with state space $\Omega=\{1,\dots,n\}^d$. As a consequence, we showed that the Rook's Walk has an asymptotic cutoff at time $\frac{d(n-1)}{2n}\log d$ with window $\frac{d(n-1)}{n}$. We achieved this by reducing the chain to a one-dimensional birth-death chain via a projection onto Hamming shells, computing the full spectral data of the projection, and combining Wilson's eigenfunction lower bound with an \(L^2\) upper bound.

A natural direction is to sharpen constants in the window and to understand whether the window order \(\frac{d(n-1)}{n}\) is optimal, or whether finer analysis of the leading eigenmodes yields a smaller cutoff window or a more descriptive cutoff profile describing the shape of the cutoff window. It would also be interesting to analyze other asymptotic regimes, for instance joint limits in which \(n=n(d)\) grows with \(d\). More broadly, the method here suggests studying other highly symmetric product (or near product) Markov chains where lumping produces a birth-death chain with an explicit orthogonal-polynomial diagonalization. Identifying when such projections preserve mixing times (or more generally how such lumpings change mixing times) may lead to further explicit cutoff results for related chains.

   \pagebreak
   \printbibliography 

\end{document}